\newtheorem{thm}{Theorem}[section]
\newtheorem{prop}[thm]{Proposition}
\theoremstyle{definition}
\newtheorem{rem}[thm]{Remark}
\DeclareMathOperator{\Aut}{Aut}
\newcommand{\field}[1]{\mathbb{#1}}
\newcommand{\integers}{\ensuremath{\field{Z}}}
\newcommand\restr[2]{{
  \left.\kern-\nulldelimiterspace 
  #1 
  \vphantom{\big|} 
  \right|_{#2} 
  }}
\begin{document}

\author{ Daniel J.~Woodhouse}
\title{Revisiting Leighton's Theorem with the Haar Measure}

\address{Department of Mathematics \\ Technion \\ Haifa \\ Israel}
\email{daniel.woodhouse@mail.mcgill.ca}
\keywords{}
\subjclass[]{}
\thanks{The author was supported by the Israel Science Foundation (grant 1026/15).}

\begin{abstract}
Leighton's graph covering theorem states that a pair of finite graphs with isomorphic universal covers have a common finite cover.
We provide a new proof of Leighton's theorem that allows generalizations; we prove the corresponding result for graphs with fins.
As a corollary we obtain pattern rigidity for free groups with line patterns, building on the work of Cashen-Macura and Hagen-Touikan.
To illustrate the potential for future applications, we give a quasi-isometric rigidity result for a family of cyclic doubles of free groups.
\end{abstract}

\maketitle

Leighton's graph covering theorem states that if $X_1$ and $X_2$ are finite graphs with isomorphic universal covers, then $X_1$ and $X_2$ have isomorphic finite covers.
The case of $k$-regular graphs was first proved by Angluin and Gardener~\cite{AngluinGardiner81} and was soon followed by Leighton's proof of the general case~\cite{Leighton82}.
Subsequently, Bass and Kulkarni gave another proof in the context of studying lattices in the automorphism groups of trees~\cite{BassKulkarni90}.
Walter Neumann revisited both proofs and proved a generalization for coloured graphs and partial results for what he called symmetry restricted graphs~\cite{WNeumann10}.

Given the ubiquity of group actions on trees, it is unsurprising that Leighton's theorem has seen a number of applications~\cite{BehrstockWNeumann12, Levitt15, CashenMartin17}.

Let $Y$ be a finite graph.
Let $S_i$ be a graph homeomorphic to a circle of circumference $\ell_i$, that is to say a circle subdivided into $\ell_i$ edges.
Let $\gamma_i : S_i \rightarrow Y$ be a combinatorial geodesic of length $\ell_i$, for $1 \leq i \leq n$.
A \emph{graph with fins} is the compact non-positively curved square complex $X$ obtained by taking the mapping cylinder of the map $\gamma : \bigsqcup_{i=1}^n S_i \rightarrow Y$, where $\gamma$ restricts to $\gamma_i$ on $S_i$.
Note that there is the natural retraction $X \rightarrow Y$.
We will prove the following generalization of Leighton's theorem:

\begin{thm} \label{thm:LeightonFins}
Let $X_1$ and $X_2$ be compact graphs with fins such that $\widetilde{X}_1 \cong \widetilde{X}_2$.
Then there exists isomorphic, finite index common covers of $X_1$ and $X_2$. 
\end{thm}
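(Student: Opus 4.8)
The plan is to translate the statement into a commensurability question about uniform lattices and then solve it with an averaging argument over $\Aut(\widetilde{X})$. Fix once and for all an isomorphism identifying $T := \widetilde{X}_1 \cong \widetilde{X}_2$, and regard $\Gamma_i := \pi_1(X_i)$ as subgroups of $G := \Aut(T)$ (cellular isometries of the CAT(0) square complex $T$) via the deck actions. Since each $X_i$ is compact, $\Gamma_i$ acts freely, properly discontinuously, and cocompactly, so it is a uniform lattice in $G$; and $G$, with the compact-open topology, is a locally compact totally disconnected group that is unimodular because it contains a cocompact lattice. With this setup the theorem reduces to the following: there exist $g \in G$ and finite-index subgroups $\Lambda_i \le \Gamma_i$ with $\Lambda_1 = g \Lambda_2 g^{-1}$. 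Indeed $g$ then descends to an isomorphism $T/\Lambda_1 \cong T/\Lambda_2$, and each $T/\Lambda_i$ is a finite cover of $X_i$, giving the required common cover. So the goal is \emph{commensurability up to conjugacy} of uniform lattices in $G$.

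Next I would pin down the structure of $T$ and of $G$. The complex $T$ is a core tree $T_0 = \widetilde{Y}$ together with Euclidean strips $\cong \mathbb{R} \times [0,1]$, one for each lift of each geodesic $\gamma_i$, glued to $T_0$ along the bi-infinite geodesic lines $\mathbb{R} \times \{1\}$ and free along their outer boundaries $\mathbb{R} \times \{0\}$. These strips are exactly the maximal flat pieces carrying free faces, hence are canonically recognizable, so every automorphism of $T$ preserves $T_0$ and permutes the fins. Restriction therefore gives a homomorphism $G \to \Aut(T_0)$ whose image consists of the tree automorphisms respecting the induced \emph{fin-colouring}: the datum, on each geodesic line of $T_0$, of which fins (of which circumferences $\ell_i$, and with which combinatorial phase) are attached. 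In other words, the theorem is precisely an instance of a \emph{symmetry-restricted} Leighton theorem for the tree $T_0$ --- the regime in which W.\ Neumann obtained only partial results --- and the task is to carry out a proof of Leighton's theorem robust enough to preserve these colours.

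The heart of the argument is the Haar-measure construction of the commensurating data. Fix a Haar measure $\mu$ on $G$. I would build the common cover from a $G$-invariant \emph{measured correspondence} between the two cocompact patterns: informally, the space of germs of colour-preserving local matchings identifying the $\Gamma_1$-structure with the $\Gamma_2$-structure near a point of $T$, on which $G$ acts and which inherits a measure from $\mu$. Unimodularity of $G$ is what forces this invariant measure to be finite, and finiteness in turn produces a finite orbit structure, equivalently the element $g$ and the finite-index subgroups $\Lambda_i$ above, with the fin-colours transported throughout so that the matching respects fins. I expect the main obstacle to be \emph{upgrading} such a measured matching on $T_0$ to a genuine cellular isomorphism of square complexes: each strip contributes a $\mathbb{Z}$'s worth of phase (how its squares line up along the attaching line) that must be synchronized consistently and globally, so that matched lines carry matched fins cell-for-cell and the strips close up to finite annuli of the correct circumferences $\ell_i$ in the finite quotient. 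Verifying that the averaging argument can be made to respect this one-dimensional holonomy --- rather than merely matching the decorated trees coarsely --- is the delicate step on which the generalization to fins really turns.
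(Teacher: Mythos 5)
Your setup---identifying $\widetilde{X}_1\cong\widetilde{X}_2$, viewing $\Gamma_1,\Gamma_2$ as uniform lattices in $G=\Aut(\widetilde{X})$, invoking unimodularity because $G$ contains a lattice, and reducing the theorem to commensurability of the $\Gamma_i$ up to conjugacy in $G$---is exactly the paper's, and your ``germs of colour-preserving local matchings'' is morally the paper's notion of an admissible polyhedral pair. But the centre of your argument is missing. The assertion that this correspondence ``inherits a measure from $\mu$'', that ``unimodularity forces this invariant measure to be finite'', and that finiteness ``produces a finite orbit structure, equivalently the element $g$ and the finite-index subgroups $\Lambda_i$'' is not a mechanism: you have not defined the space being measured, and there is no general principle by which finiteness of an invariant measure on a space of local matchings yields a single conjugator $g$. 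What actually has to happen---and what the paper does---is combinatorial: one takes the finitely many admissible polyhedral pairs (cubical stars equipped with compatible maps to both $X_1$ and $X_2$, admissible meaning the two lifts to $\widetilde{X}$ differ by an element of $G$), writes down one linear gluing equation per admissible face pair $\mathbf{F}$ demanding that the total weight of pairs on its left equal that on its right, and then exhibits a positive solution by setting $\omega(\mathbf{P})=\mu\bigl(G_{(\widetilde{P}_1)}\bigr)$, the Haar measure of the pointwise stabilizer of a lift. The verification is the coset count $\sum_{\mathbf{P}\in\overleftarrow{\mathbf{F}}}\mu\bigl(G_{(\widetilde{P}_1)}\bigr)=\mu\bigl(G_{(\widetilde{F}_1)}\bigr)$, after which commensurability of the measures of stabilizers of finite sets lets one rescale to positive integers and glue $\omega(\mathbf{P})$ copies of each piece into the common cover. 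None of this (the pieces, the equations, the identity, the integrality step) appears in your proposal, so as written it is a plausible programme rather than a proof.

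A secondary point: the ``delicate step'' you flag at the end---synchronizing the phase of each strip so that matched lines carry matched fins cell-for-cell---is an artefact of your decision to project to the core tree and treat the fins as a decoration to be restored afterwards. The paper avoids this issue entirely by never leaving the square complex: the local pieces are stars in $\widetilde{X}$ itself, obtained by cutting along the vertical hyperplanes, so each piece already carries its portion of fin squares and the gluing along face pairs matches fins cell-for-cell by construction. If you retain your tree-plus-colouring formulation you genuinely do have to resolve the holonomy problem you describe; reformulating with square-complex stars dissolves it.
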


If our graph has no fins then we recover Leighton's original theorem.
Spiritually, we will prove Theorem~\ref{thm:LeightonFins} by walking the same path that Leighton took in the eighties.
The principal obstacle to proving Leighton style theorems is the absence of a suitable fiber product if both graphs don't cover a common base.
Beneath all the arithmetic employed in Leighton's original proof is a desire to construct something like a fiber product.
The key idea in this paper is that the numerology can be ditched and replaced with arguments involving the Haar measure.
Because the Haar measure exists for all second countable, locally compact groups, the arguments are extremely flexible and welcoming to generalization.

\subsection{Applications to rigid line patterns in free groups}

Let $X$ be a Gromov hyperbolic space.
A \emph{line pattern in $X$} is a set of equivalence classes of biinfinite quasi-geodesics up to bounded distance.
A \emph{quasi-isometry respecting line patterns} $\phi: (X, \mathcal{L}) \rightarrow (X', \mathcal{L}')$ is quasi-isometry $X \rightarrow X'$ that induces a bijection between the equivalence classes of quasi-geodesics $\mathcal{L} \rightarrow \mathcal{L}'$.

Quasi-isometries respecting line patterns were first considered by Schwartz~\cite{Schwartz97} in the context of line patterns on $\mathbb{H}^n$. 
The notion of \emph{pattern rigidity} was introduced in~\cite{MosherSageevWhyte2011}, in order to consider equivariant collections of subspaces up to quasi-isometry.
Generalizations of Schwartz's result have been given in~\cite{BiswasMj12, Mj12} and Cashen and Macura considered line patterns in free groups~\cite{CashenMacura11}, studying the problem in analogy to Whitehead's algorithm~\cite{Whitehead36}.

Let $F$ be a free group and $S = \{g_1, \ldots, g_n\} \subseteq F$ is finite set of elements of $F$ such that $g_i^p \neq h g_j^q h^{-1}$ for $1 \leq i < j \leq n$, $h \in F$, and $p,q \in \integers - \{0\}$.
That is to say that the elements in $S$ are \emph{weakly incommensurable}.
The \emph{line pattern $\mathcal{L}_S$ generated by $S$} is the equivalence classes given by the following set of quasi-geodesics:
\[
 \big\{ \; h \cdot \langle g_i \rangle \mid h \in F, \; 1 \leq i \leq n \; \big\}
\]
The line pattern $\mathcal{L}_S$ is \emph{rigid} if $F$ admits no cyclic splitting relative to $S$.
See~\cite{CashenMacura11} for full details.
If $(F, \mathcal{L}_S)$ is a free group with line pattern, and $F' \leqslant F$ is a finite index subgroup, then $F'$ inherits the line pattern $\mathcal{L}_S$ from $F$.

The following application, explained to the author by Hagen and Touikan, combines Theorem~\ref{thm:LeightonFins} with previous results to prove quasi-isometric rigidity for free groups with rigid line patterns:

\begin{thm} \label{thm:rigidLinePatterns}
 Let  $(F_1, \mathcal{L}_{S_1})$ and $(F_2, \mathcal{L}_{S_2})$ be free groups with rigid line patterns.
 Suppose that $\phi: (F_1, \mathcal{L}_{S_1}) \rightarrow (F_2, \mathcal{L}_{S_2})$ is quasi-isometry respecting the line patterns.
 Then there exists finite index subgroups $F_i' \leqslant F_i$ such that there is an isomorphism that respects the line patterns $\phi': (F_1', \mathcal{L}_{S_1}) \rightarrow (F_2', \mathcal{L}_{S_2})$.
\end{thm}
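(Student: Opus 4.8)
The plan is to translate the statement into the language of graphs with fins so that Theorem~\ref{thm:LeightonFins} applies, with pattern rigidity supplying the hypothesis $\widetilde{X}_1 \cong \widetilde{X}_2$. First I would build a compact graph with fins $X_i$ from each pair $(F_i, \mathcal{L}_{S_i})$: realize $F_i$ as $\pi_1(Y_i)$ for a finite graph $Y_i$ (say a rose), and represent each generator $g \in S_i$ by a combinatorial geodesic loop $\gamma_{i,g} \colon S_{i,g} \to Y_i$. Weak incommensurability guarantees that these are pairwise non-homotopic primitive geodesics, so no two fins are attached along the same line. Taking the mapping cylinder produces $X_i$ with $\pi_1(X_i) \cong F_i$ via the retraction $X_i \to Y_i$. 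The point of the construction is that in the universal cover $\widetilde{X}_i$ the fins are glued precisely along the lifts to the tree $\widetilde{Y}_i$ of the lines of $\mathcal{L}_{S_i}$; thus $\widetilde{X}_i$ is a canonical ``tree with fins'' recording both $F_i$ and the pattern $\mathcal{L}_{S_i}$.

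The crux is to promote the coarse datum $\phi$ to an honest isomorphism $\widetilde{X}_1 \cong \widetilde{X}_2$ of these models. A pattern-respecting quasi-isometry induces a homeomorphism of Gromov boundaries $\partial F_1 \to \partial F_2$ carrying the endpoint set of $\mathcal{L}_{S_1}$ to that of $\mathcal{L}_{S_2}$, that is, a homeomorphism of the associated decomposition spaces. Here I would invoke the work of Cashen--Macura, as refined by Hagen--Touikan: when the line patterns are rigid (no cyclic splitting relative to $S_i$), such a boundary homeomorphism is induced by an isomorphism of the trees with fins, equivalently an isomorphism $\widetilde{X}_1 \cong \widetilde{X}_2$ of non-positively curved square complexes that carries fins to fins. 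This is where I expect the main obstacle to lie: it is exactly the rigidity hypothesis that rules out the continuous families of pattern-preserving homeomorphisms produced by cyclic splittings, and so forces the quasi-isometry to be rigid enough to reconstruct the decorated tree on the nose. All of the coarse information in $\phi$ is consumed at this step, and everything downstream is exact.

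With $\widetilde{X}_1 \cong \widetilde{X}_2$ in hand, Theorem~\ref{thm:LeightonFins} immediately furnishes isomorphic finite covers $\widehat{X}_1 \to X_1$ and $\widehat{X}_2 \to X_2$ with $\widehat{X}_1 \cong \widehat{X}_2$. Finally I would descend back to the free groups: the cover $\widehat{X}_i \to X_i$ corresponds to a finite index subgroup $F_i' := \pi_1(\widehat{X}_i) \leqslant F_i$, and since the covering map sends fins to fins, the inherited pattern on $F_i'$ is exactly $\mathcal{L}_{S_i}$. Because the isomorphism $\widehat{X}_1 \cong \widehat{X}_2$ respects fins, on fundamental groups it yields an isomorphism $\phi' \colon (F_1', \mathcal{L}_{S_1}) \to (F_2', \mathcal{L}_{S_2})$ respecting the line patterns, as required. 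The only non-formal step is the boundary-to-tree upgrade of the second paragraph; the first and third are bookkeeping translating between the topological model and the group.
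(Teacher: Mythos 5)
Your overall architecture matches the paper's: consume the quasi-isometry to produce graphs with fins $X_1, X_2$ with $\widetilde{X}_1 \cong \widetilde{X}_2$, apply Theorem~\ref{thm:LeightonFins}, and read off the finite index subgroups. The last step (descending from the common cover to pattern-respecting isomorphic subgroups) is fine. But the crux step, as you set it up, has a genuine gap. You build $X_i$ as a mapping cylinder over a rose $Y_i$ for $F_i$, so $\widetilde{Y}_i$ is the Cayley tree of $F_i$ with respect to a chosen basis, decorated with fins along the axes of the conjugates of the $S_i$. These two models are constructed independently of one another, and there is no reason their universal covers should be isomorphic --- indeed they typically are not. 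Already the ranks can differ: a rank~$2$ and a rank~$3$ free group are quasi-isometric, but the rose models give a $4$-regular and a $6$-regular tree. Even at equal rank, the combinatorics of how the fins meet each vertex depends on the choice of basis and on the words in $S_i$, not just on the boundary data. Rigidity of the pattern does not rescue this: what Cashen--Macura prove is not that a pattern-preserving boundary homeomorphism induces an isomorphism between two arbitrary rose-based models, and no theorem of that form can hold.

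What the paper does instead --- and what you would need to do --- is construct a \emph{single} geometric model from the quasi-isometry class. Cashen--Macura produce one hyperbolic CAT(0) cube complex $X$ with line pattern $\mathcal{L}$ on which \emph{both} $F_1$ and $F_2$ act, quasi-conjugately to their actions on $(F_i, \mathcal{L}_{S_i})$; this is precisely where the pattern-preserving quasi-isometry $\phi$ is consumed. Hagen--Touikan's panel collapse (or Mosher--Sageev--Whyte on quasi-actions on bushy trees) replaces $X$ by a tree $T$. Attaching a strip along each line of $\mathcal{L}$ in $T$ gives one space $\widetilde{X}$ on which both $F_i$ act freely and cocompactly, and the quotients $X_i = \widetilde{X}/F_i$ are graphs with fins with \emph{literally the same} universal cover. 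From there your third paragraph goes through verbatim. So the issue is not the references you cite but the order of operations: the common tree-with-fins must come first and the compact models must be derived from it, rather than building the compact models first and hoping an isomorphism of their covers appears.
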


\begin{proof}
 In~\cite{CashenMacura11} Cashen and Macura construct a hyperbolic CAT(0) cube complex $X$ with line pattern $\mathcal{L}$ such that there is an action of $F_i$ on $(X, \mathcal{L})$ that is quasi-conjugate to the action of $F_i$ on $(F_i, \mathcal{L}_{S_i})$ for $i = 1,2$.
 In general, they remark that $X$ is not necessarily a tree, which would be the preferable state of affairs.
 In~\cite{HagenTouikan18} Hagen and Touikan  apply a technique called \emph{panel collapses} to show $X$ can be replaced with a tree $T$. Alternatively, this can be achieved using Mosher, Sageev, and Whyte's result about cobounded quasi-actions on bushy trees~\cite{MosherSageevWhyte2011}.
 
 For each equivalence class in $\mathcal{L}$ we attach one side of the strip $[0,1] \times \mathbb{R}$ along the unique geodesic line in the class to obtain a space $\widetilde{X}$.
 Thus the action of $F_i$ on $(T,\mathcal{L})$ becomes an action of $F_i$ on $\widetilde{X}$ for $i = 1,2$. 
 The actions are free and cocompact since they are quasi-conjugate to the original left action on $(F_i, \mathcal{L}_{S_i})$.
 The quotient $X_i = \widetilde{X} / F_i$ is a graph with fins.
 As $X_1$ and $X_2$ are graphs with fins with isomorphic universal covers we may apply Theorem~\ref{thm:LeightonFins}.
 This common finite cover corresponds to finite index subgroups $F_i' \leqslant F_i$ identified by an isomorphism that respects the line patterns. 
\end{proof}

\subsection{Applications to quasi-isometric rigidity}
The following example, suggested to the author by Emily Stark, illustrates how Theorem~\ref{thm:LeightonFins} can be used to obtain rigidity results.

\begin{thm} \label{thm:double_rigidity}
 Let $F_i$ be a finitely generated, non-abelian free group for $i = 1,2$.
 Let $w_i \in F_i$ be an element such that $\mathcal{L}_{\{w_i\}}$ is a rigid line pattern in $F_i$.
 Let ${G_i = F_i \ast_{\langle w_i \rangle} F_i}$, the amalgamated double of $F_i$ over $\langle w_i \rangle$.
 If $G_1$ is quasi-isometric to $G_2$, then $G_1$ is commensurable with $G_2$.
\end{thm}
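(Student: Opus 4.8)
The plan is to realize each $G_i$ as the fundamental group of the double of a graph with fins, and then to assemble a common finite cover from the line-pattern rigidity of the vertex groups. First I would fix geometric models. Writing $F_i = \pi_1(R_i)$ for a rose $R_i$ and representing $w_i$ by an immersed combinatorial geodesic circle, let $Z_i$ be the graph with fins obtained by attaching to $R_i$ the mapping cylinder of this circle. Then $G_i = F_i \ast_{\langle w_i\rangle} F_i$ is $\pi_1(M_i)$, where $M_i$ is obtained by gluing two copies of $Z_i$ along the free boundary circle of their fins: the two half-cylinders assemble into the single cylinder carrying the edge group $\langle w_i\rangle$, so $M_i = DZ_i$ is the double. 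Cutting $\widetilde{M_i}$ along the lines carrying the edge groups exhibits $\widetilde{Z_i}$ as the Cayley tree of $F_i$ with strips attached along the lines of the pattern $\mathcal{L}_{\{w_i\}}$.

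The heart of the argument, and the step I expect to be the main obstacle, is to convert the quasi-isometry $\phi \colon G_1 \to G_2$ into a quasi-isometry of the line-patterned vertex groups. Since $F_i$ is non-abelian free and $\langle w_i\rangle$ is infinite of infinite index, $G_i$ is one-ended, and the hypothesis that $\mathcal{L}_{\{w_i\}}$ is rigid says exactly that $F_i$ admits no cyclic splitting relative to $\langle w_i\rangle$; hence the vertex groups are rigid and the given amalgam is the cyclic JSJ decomposition of $G_i$. By the quasi-isometry invariance of splittings over two-ended subgroups (Papasoglu--Whyte, Bowditch), $\phi$ coarsely respects the Bass--Serre trees, carrying vertex spaces to vertex spaces and edge spaces to edge spaces. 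The edge spaces incident to a fixed vertex space are precisely the cosets $F_i/\langle w_i\rangle$, that is, the pattern $\mathcal{L}_{\{w_i\}}$, so restricting $\phi$ to a vertex space yields a quasi-isometry $(F_1, \mathcal{L}_{\{w_1\}}) \to (F_2, \mathcal{L}_{\{w_2\}})$ respecting the line patterns. Verifying that the induced pattern is genuinely $\mathcal{L}_{\{w_i\}}$, and that the coarse identification of trees promotes to such a pattern-respecting quasi-isometry, is where the rigidity hypothesis and the JSJ machinery do the real work.

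With this in hand I would apply Theorem~\ref{thm:rigidLinePatterns} to obtain finite index subgroups $F_i' \leqslant F_i$ and an isomorphism $\phi' \colon (F_1', \mathcal{L}_{\{w_1\}}) \to (F_2', \mathcal{L}_{\{w_2\}})$ respecting the inherited line patterns. Realizing $F_i'$ by the finite cover $R_i' \to R_i$, the corresponding cover $Z_i' \to Z_i$ is again a graph with fins, now with one fin per elevation of the $w_i$-circle, and its universal cover is the Cayley tree of $F_i'$ with strips along the inherited pattern. Because $\phi'$ is an isomorphism carrying one pattern to the other, it induces an isomorphism $\widetilde{Z_1'} \cong \widetilde{Z_2'}$, so Theorem~\ref{thm:LeightonFins} furnishes a common finite cover $\widehat{Z}$ of $Z_1'$ and $Z_2'$, with covering maps that may be taken to respect the fin structure, sending boundary circles to boundary circles as coverings of circles.

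Finally I would double. Gluing two copies of $\widehat{Z}$ along their fin boundary circles produces $D\widehat{Z}$, and since the coverings $\widehat{Z} \to Z_i'$ restrict to coverings on the boundary circles, doubling is compatible and $D\widehat{Z}$ is a finite cover of $DZ_i'$. On the other hand $DZ_i'$ is a finite cover of $M_i$, because the double of the finite cover $Z_i' \to Z_i$ covers $DZ_i = M_i$. Composing, $D\widehat{Z}$ is a finite cover of both $M_1$ and $M_2$, so $\pi_1(D\widehat{Z})$ embeds as a finite index subgroup of each $G_i$. Thus $G_1$ and $G_2$ share an isomorphic finite index subgroup and are commensurable, as required.
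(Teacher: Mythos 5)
Your overall architecture matches the paper's: use the JSJ decomposition and Papasoglu's quasi-isometry invariance to extract a pattern-respecting quasi-isometry $(F_1,\mathcal{L}_{\{w_1\}})\rightarrow(F_2,\mathcal{L}_{\{w_2\}})$, produce a common finite cover of graphs with fins, and then double. The JSJ step and the final doubling step are essentially the paper's argument. But there is a genuine gap in the middle, at the sentence ``Because $\phi'$ is an isomorphism carrying one pattern to the other, it induces an isomorphism $\widetilde{Z_1'}\cong\widetilde{Z_2'}$.'' Since $Z_i'\rightarrow Z_i$ is a finite cover, $\widetilde{Z_i'}=\widetilde{Z_i}$, and by your own setup $\widetilde{Z_i}$ is the Cayley tree of $F_i$ with respect to the rose $R_i$, with strips attached. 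An abstract isomorphism $F_1'\rightarrow F_2'$ gives no isomorphism between these complexes: the underlying trees are the $2\,\mathrm{rank}(F_1)$- and $2\,\mathrm{rank}(F_2)$-regular trees, and nothing in the hypotheses forces the ranks (or the lengths of the immersed $w_i$-circles, or the combinatorial placement of the strips) to agree. So the hypothesis of Theorem~\ref{thm:LeightonFins} is simply not available for the rose-based models $Z_1'$ and $Z_2'$, and without it there need not exist any common cover of them at all.

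The paper avoids this by never using the roses as geometric models. Inside the proof of Theorem~\ref{thm:rigidLinePatterns}, the Cashen--Macura complex together with the Hagen--Touikan panel collapse produces a single tree $T$ with line pattern on which \emph{both} $F_1$ and $F_2$ act (quasi-conjugately to their actions on themselves with the patterns $\mathcal{L}_{\{w_i\}}$); attaching strips gives one space $\widetilde{X}$ with two free cocompact actions, so the quotients $X_i=\widetilde{X}/F_i$ share the universal cover by construction. Theorem~\ref{thm:LeightonFins} is then applied once, directly to $X_1$ and $X_2$, and the resulting common cover is doubled exactly as you do. Your detour --- first invoking Theorem~\ref{thm:rigidLinePatterns} to get $F_i'$ and $\phi'$, then trying to re-enter the Leighton machinery with rose-based covers --- discards the common model that makes the second application legitimate. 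The fix is to work with the Cashen--Macura--Hagen--Touikan model throughout and take the finite-index subgroups from the single application of Theorem~\ref{thm:LeightonFins}, rather than trying to rebuild geometric models from the abstract isomorphism $\phi'$.
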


 \begin{proof}
  As the cyclic splitting given in the statement is the JSJ-decomposition for $G_i$, Theorem 7.1 in~\cite{Papasoglu05} tells us that there is a 
 quasi-isometry respecting line patterns $$\phi: (F_1, \mathcal{L}_{\{ w_1 \}}) \rightarrow (F_2, \mathcal{L}_{\{ w_2 \}}).$$
 Hence, as in the proof of Theorem~\ref{thm:rigidLinePatterns} both $F_1$ and $F_2$ are the fundamental groups of graphs with fins $X_1$ and $X_2$ with isomorphic universal covers $\widetilde{X}_1 \cong \widetilde{X}_2$, such that the preimages of the fins correspond to the line patterns $\mathcal{L}_{S_i}$.
 Let $\widehat{X}$ be the common finite cover of $X_1$ and $X_2$ given by Theorem~\ref{thm:LeightonFins}.
 We can decompose $X_i$ as a mapping cylinder of single circle mapping into a graph $\gamma_i: S_i \rightarrow Y_i$ so  $$X_i = Y_i \sqcup S_i \times [0,1] \; \Big/ \; \big\{ (x,0) \sim \gamma_i(x) \big\}.$$
 Similarly, $\widehat{X}$ can be decomposed as a mapping cylinder of a finite set of circles mapping into a graph $\hat{\gamma}: \bigsqcup_{j=1}^n \widehat{S}_j \rightarrow \widehat{Y}$, where $\widehat{Y}$ is a finite common cover of $Y_1$ and $Y_2$. So
 $$\widehat{X} = \widehat{Y} \; \bigsqcup_{j=1}^n \widehat{S}_j \times [0,1] \; \Big/ \; \{ (\hat{x}, 0) \sim \hat{\gamma}(\hat{x}) \}.$$
 
 A graph of spaces $Z_i$ can be constructed for $G_i$ by taking two copies of $X_i$ and identifying the ends of the cylinder $S_i \times \{1\} \subseteq X_i$ and obtaining the double, so $G_i$ can be identified with $\pi_1 X_i$.
 A common finite cover $\widehat{Z}$ of $Z_1$ and $Z_2$ is constructed by similarly taking two copies of $\widehat{X}$ and identifying the ends of the cylinders $\bigsqcup_{j=1}^n \widehat{S}_j \times \{1\} \subseteq \widehat{X}$ to again obtain the double.
 Since $\widehat{X}$ covers both $X_1 $ and $X_2$, it is immediate that $\widehat{Z}$ covers both $Z_1$ and $Z_2$.
 Therefore $\pi_1 \widehat{Z}$ is a common finite index subgroup for $G_1$ and $G_2$.
 \end{proof}

{\bf Acknowledgements: } Thanks to Mark Hagen, Nicolas Touikan, Christopher Cashen, and Emily Stark for conversations, correspondence, explanations, and suggestions.

\section{Polyhedron and finite index covers} \label{sec:polyhedron}

Let $X$ be a graph with fins.
Let $Y$ be the graph underlying $X$ and assume we have fixed an orientation of each edge in $Y$.
 Let $\mathscr{H}$ be the set of \emph{vertical hyperplanes} in $X$, that is to say the hyperplanes dual to the edges in $Y$.
Let $\dot{X}$ denote the square complex obtained by subdividing along the vertical hyperplanes.

A \emph{star} is a square complex $P$ with a distinguished $0$-cube $p$ such that $P$ is the cubical neighborhood of $p$.
We can obtain stars by taking the closures of the complementary components $X - \mathscr{H}$.
The resulting stars are subcomplexes of $\dot{X}$.

A \emph{polyhedron} is a star $P$ with an isometric embedding $\phi: P \rightarrow \dot{X}$ such that $\phi(P)$ is the cubical neighborhood in $\dot{X}$ of a $0$-cube in $Y$.
A \emph{face} $(F, \varphi)$ is a finite tree $F$ that maps isomorphically to a vertical hyperplane in $X$.
We say that $(F, \varphi)$ is a \emph{face of} $(P, \phi)$ if $(F, \varphi)$ is isomorphic to the restriction of $(P, \phi)$ to a subcomplex $P \cap \phi^{-1}(\Lambda)$ where $\Lambda \in \mathscr{H}$.
Up to isomorphism, there are only finitely many polyhedron and faces.
Each face is the face of precisely two polyhedron, one of the left and one on the right.
The polyhedron on the \emph{left} [\emph{resp. right}] is the polyhedron whose image contains the origin vertex [\emph{resp. terminal vertex}] of the edge dual to the image of the face.
If $(P, \phi)$ and $(P',\phi')$ are polyhedron on the left and right of the face $(F, \varphi)$, then the polyhedron can be glued together along the subcomplexes corresponding to $F$ to obtain a new complex $P \cup P'$ that maps into $X$ via $\phi$ and $\phi'$.

An $n$-sheeted cover of $X$ can be constructed by taking $n$ copies of each polyhedron and face and then choosing a correspondence between the $n$-copies of a polyhedron on the left of a face and the $n$-copies on the right.
The polyhedron can then be glued together to obtain an $n$-sheeted cover of $\dot{X}$.
Reversing the subdivisions along the vertical hyperplanes gives the cover of $X$. See Figure~\ref{fig:polygonCovers}.

 \begin{figure}
   \begin{overpic}[width=.9\textwidth,tics=10]{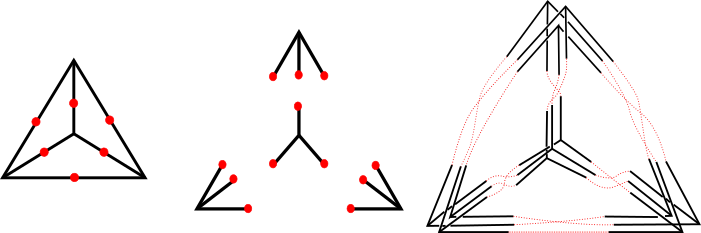} 
    \end{overpic}
    \caption{From left to right, a graph with hyperplanes highlighted, the polyhedron for the graph, and 3 copies of each polyhedron used to construct a degree three cover of the original graph.} 
    \label{fig:polygonCovers}
    \end{figure}

The idea behind the proof of Theorem~\ref{thm:LeightonFins} is to replicate this construction on two graphs with fins simultaneously.
This requires first defining an appropriate notion of being a polyhedron for two spaces simultaneously and understanding how we might glue them together appropriately.
In this setting we obtain a set of gluing equations that we must solve in order to construct our cover. 
We will utilize the Haar measure to find solutions to these equations.

\section{Proof of Main Theorem} \label{sec:MainTheorem}

Let $X_1$ and $X_2$ be graphs with fins such that $\widetilde{X}_1 \cong \widetilde{X}_2$.
Let $Y_i \subseteq X_i$ be the underlying graph and $r : X_i \rightarrow Y_i$ be the natural retraction.
Identify $\widetilde{X}_1 \cong \widetilde{X}_2 = : \widetilde{X}$ and $\widetilde{Y}_1 \cong \widetilde{Y}_2 =: \widetilde{Y}$.
Let $p_i: \widetilde{X} \rightarrow X_i$ be the covering map.
Let $\Gamma_i = \pi_1(X_i)$.
Let $\mathscr{H}_i$ be the set of \emph{vertical hyperplanes} in $X_i$.

Let $G = \Aut(\widetilde{X})$.
Then there is an embedding $\Gamma_i \leqslant G$ as the group of deck transformations.
Note that $\Gamma_i$ is a uniform free lattice in $G$.
We will assume that $G$ does not invert hyperplanes.
This can be achieved by either subdividing the vertical hyperplanes, or applying Proposition 6.3 in~\cite{Bass93} to pass to index $2$ subgroups of $G, \Gamma_1,\Gamma_2$.
Let $\underline{X} = \widetilde{X} / G$.
There are natural quotient maps $q_i : X_i \rightarrow \underline{X}$.
As $G$ does not invert hyperplanes, we can assign $G$-invariant orientations to all the hyperplanes in $\widetilde{Y}$, which gives orientations of the vertical hyperplane $\mathscr{H}_i$.
If $S$ is a subset of $\widetilde{X}$, then $G_S$ denotes the \emph{setwise stabilizer} of $S$, and $G_{(S)}$ denotes the \emph{pointwise stabilizer} of $S$.

\begin{rem} The space $\underline{X}$ encodes the information that Leighton refers to as the \emph{degree refinement}.
 Our use more closely resembles Neumann's use in~\cite{WNeumann10}, as the \emph{colouring graph}.
 By giving each vertex, edge, and square a unique colour we obtain colourings of $X_1$, $X_2$, and $\widetilde{X}$.
 The colouring of $\widetilde{X}$ is $G$-equivariant.
 In fact, everything in this section works if $G$ is the full colour preserving automorphism group of a coloured graph with fins with free uniform lattices $\Gamma_1$ and $\Gamma_2$.
\end{rem}

\subsection{Polyhedral Pairs} \label{subsec:polyhedral pair}

A \emph{polyhedral pair} is a triple $(P, \phi_1, \phi_2)$ where each pair $P, \phi_i$ is a polyhedron for $X_i$ and the following diagram commutes:

\begin{displaymath}
 \xymatrix{
 P \ar[r]^{\phi_1} \ar[d]_{\phi_2} & X_1 \ar[d]^{q_1} \\
 X_2 \ar[r]_{q_2} & \underline{X}  \\
 }
\end{displaymath}

We will denote a polyhedral pair with boldface: $\mathbf{P} = (P, \phi_1, \phi_2)$.
There are only finitely many polyhedral pairs for $X_1$ and $X_2$ up to isomorphism.
By forgetting either $\phi_1$ or $\phi_2$ we obtain a polyhedron for either $X_1$ or $X_2$. 
We say that we obtain such polyhedron by \emph{restricting $\mathbf{P}$ to either $X_1$ or $X_2$}.
See Figure~\ref{fig:polyhedral pair} for an illustrated example.

 \begin{figure}
   \begin{overpic}[width=.7\textwidth,tics=10]{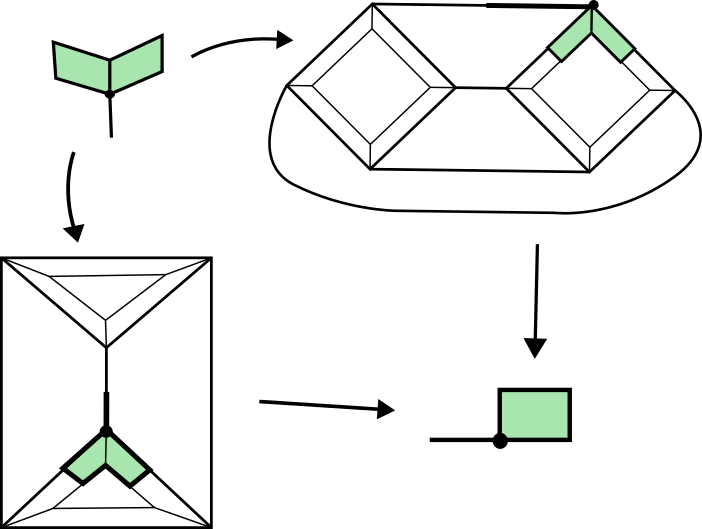} 
    \put(2,70){$P$}
    \put(95,69){$X_1$}
    \put(-7,20){$X_2$}
    \put(83,8){$\underline{X}$}
    \put(31,72){$\phi_1$}
    \put(3,46){$\phi_2$}
    \put(42,12){$q_2$}
    \put(80,32){$q_1$}
   \end{overpic}
    \caption{A polyhedral pair illustrated.} 
    \label{fig:polyhedral pair}
    \end{figure}

Let $\mathbf{P}$ be a polyhedral pair.
Let $\tilde{\phi}_i : P \rightarrow \widetilde{X}$ be some choice of lift.
Note that $\tilde{\phi}_i$ is unique up to post composition by $g_i \in \Gamma_i$.
Then a polyhedral pair $\mathbf{P}$ is \emph{admissible} if there exist a $g \in G$ such that $g \circ \tilde{\phi}_1 = \tilde{\phi}_2$.
Note that this does not depend on the initial choice of lifts.
Indeed. any alternative choice of lift is of the form $\gamma_i \circ \phi_1$ where $\gamma_i \in \Gamma_i$, so $(\gamma_2 g \gamma_1^{-1}) \circ (\gamma_1  \tilde{\phi}_1) = \gamma_2 \tilde{\phi}_2$. 
Let $\mathcal{P}$ denote the (finite) set of all admissible polyhedral pairs.
All polyhedral pairs used in this paper are admissible.

  \begin{rem}If $\widetilde{X}$ is a tree, then all polyhedral pair are admissible.
 In general, this is not true.
 \end{rem}

  We can classify admissible polyhedral pairs as follows.
  Let $(P_1, \phi_1)$ and $(P_2, \phi_2)$ be polyhedron for $X_1$ and $X_2$ such that $q_1 \circ \phi_1(P_1) = q_2 \circ \phi_2 (P_2)$.
  Let $\tilde{\phi}_i: P_i \rightarrow \widetilde{X}$ be a choice of lift for each $i = 1,2$.
  Let $v_i$ be the $0$-cube in $\widetilde{P}_i : = \tilde{\phi}_i({P}_i)$.
  Let $g\in G$ such that $g\widetilde{P}_1 = \widetilde{P}_2$.
  Such a $g$ exists since $q_1 \circ p_1(v_1) = q_2 \circ p_2(v_2)$.
  Then the admissible polyhedral pairs which restrict to the polyhedron $(P_i, \phi_i)$ are in one to one correspondence with the elements of $G_{v_1}/G_{(\widetilde{P}_1)}$.
  Indeed, for each $g' \in G_{v_1}$ we obtain an isomorphism $$\tilde{\phi}_2^{-1} \circ g  g' \circ \tilde{\phi_1}: P_1 \rightarrow P_2$$
  that allows us to identify $P_1$ and $P_2$ so that we obtain a polyhedral pair $(P, \phi_1, \phi_2)$.
  Note that all $g'$ in the same $G_{(\widetilde{P}_1)}$-coset give the same polyhedral pair since they correspond to identical identifications of $P_1$ and $P_2$.

 A \emph{face pair} $\mathbf{F} = (F, \varphi_1, \varphi_2)$ is a tuple such that each $(F, \varphi_i)$ is a face for $X_i$, and the following diagram commutes:
 
 \begin{displaymath}
 \xymatrix{
 F \ar[r]^{\varphi_1} \ar[d]_{\varphi_2} & X_1 \ar[d] \\
 X_2 \ar[r] & \underline{X}  \\
 }
\end{displaymath}
 
 Let $\tilde{\varphi_i}: F \rightarrow \widetilde{X}$ be a lift of $\varphi_i$.
 A face pair is \emph{admissible} if there exists $g \in G$ such that $g \circ \tilde{\varphi}_1 = \tilde{\varphi}_2$.
 As with polyhedral pairs, admissibility of face pairs does not depend on the choice of lifts.
 
 We say that \emph{$\mathbf{F}$ is a face of $\mathbf{P}$} if $(F, \phi_i)$ is a face of the polyhedron $(P, \phi_i)$ for $i = 1,2$.
 A polyhedral pair $\mathbf{P}$ lies on the left [\emph{resp. right}] of $\mathbf{F}$ if the polyhedron $(P, \phi_i)$ lie on the left [\emph{resp right}] of $(F, \varphi_i)$.
 Note that if $(P, \phi_1)$ lies on the left [\emph{resp. right}] of $(F, \varphi_1)$, then $(P, \phi_2)$ lies on the left [\emph{resp. right}] of $(F, \varphi_2)$, since the orientations of the edge in $Y_i$ were obtained from $G$-equivariant orientations of the edges of $\widetilde{Y}$.
 Let $\mathcal{F}$ denote the set of all admissible face pairs.
 
 Unlike faces of polyhedron, face pairs can have multiple polyhedral pairs on the left (or right).
 Let $\overleftarrow{\mathbf{F}}$ and $\overrightarrow{\mathbf{F}}$ denote the sets of polyhedral pairs on the left and right of $\mathbf{F}$.
 Given $\mathbf{P} \in \overleftarrow{\mathbf{F}}$ and $\mathbf{P}' \in \overrightarrow{\mathbf{F}}$ we can glue $\mathbf{P}$ and $\mathbf{P}'$ together along $\mathbf{F}$.
 That is to say, by identifying $F$ with the corresponding subcomplexes of $P$ and $P'$, we can glue $P$ and $P'$ together, respecting the maps $\phi_i$ and $\phi_i'$ to obtain the following commutative diagram:
 
 \begin{displaymath}
 \xymatrix{
 P\cup P' \ar[r]^{\phi_1 \cup \phi_1'} \ar[d]_{\phi_2\cup \phi_2'} & X_1 \ar[d] \\
 X_2 \ar[r] & \underline{X}  \\
 }
\end{displaymath}

Thus, performing the construction in Section~\ref{sec:polyhedron} for both $X_1$ and $X_2$ simultaneously requires taking copies of each admissible polyhedral pair such that the number on the left and right of each face is the same.

\subsection{Gluing equations} \label{subsec:gluingEquations}

We define a system of linear gluing equations.
Let $\omega: \mathcal{P} \rightarrow \mathbb{R}_{>0}$ denote a weight function on the set of admissible polyhedral pairs.
For each face $\mathbf{F}$ we have the following \emph{gluing equation}:
\begin{equation} \label{gluing_equations}
 \sum_{\mathbf{P} \in \overleftarrow{\mathbf{F}}} \omega(\mathbf{P}) = \sum_{\mathbf{P} \in \overrightarrow{\mathbf{F}}} \omega(\mathbf{P})
\end{equation}
Note that since this gives a finite set of linear equations with integer coefficients, finding a set of positive real weights satisfying all gluing equations gives a set of positive integer weights satisfying all the gluing equations.

\subsection{Solving the equation} \label{subsec:solvingEquations}

Let $\mu$ be the Haar measure for $G$.
As $G$ contains a lattice -- $\Gamma_1$ for example -- $G$ is unimodular and $\mu$ is both left and right $G$-invariant.
We recall that $\mu$ is positive on every open set and finite on every compact set.

For each polyhedral pair $\mathbf{P} = (P, \phi_1, \phi_2) \in \mathcal{P}$ recall that we let $\tilde{\phi}_i:P\rightarrow \widetilde{X}$ be a lift of $\phi_i$ and $\widetilde{P}_i = \tilde{\phi}_i(P)$ for $i = 1,2$.
Then let $$\omega(\mathbf{P}) = \mu\Big(G_{(\widetilde{P}_1)}\Big).$$
As $\mu$ is left and right invariant we deduce that $\omega(\mathbf{P})$ does not depend on the choice of lifts $\tilde{\phi}_i$.
Observe that $\omega(\mathbf{P})$ is a positive real number since it is a compact neighborhood.
Moreover, after scaling $\mu$ we can assume that $\omega(\mathbf{P}) \in \integers_{>0}$ since all stabilizers of finite sets in $\widetilde{X}$ have commensurate measures.

\begin{prop}
 The weight function $\omega$ as defined, satisfy the gluing equations~\ref{gluing_equations}.
\end{prop}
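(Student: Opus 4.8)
The plan is to show that for a fixed admissible face pair $\mathbf{F}$, both sides of the gluing equation~\ref{gluing_equations} equal a single quantity, namely $\mu\big(G_{(\widetilde{F})}\big)$, where $\widetilde{F} \subseteq \widetilde{X}$ is a chosen lift of $F$. Since the arguments for the two sides are symmetric, it suffices to prove that the left-hand sum equals $\mu\big(G_{(\widetilde{F})}\big)$, and then repeat verbatim on the right.

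First I would fix a lift $\widetilde{F}$ of $F$ into $\widetilde{X}$; using admissibility of $\mathbf{F}$, the lifts $\tilde{\varphi}_1, \tilde{\varphi}_2$ may be chosen to agree, so both land on $\widetilde{F}$. Because $G$ does not invert hyperplanes, $\widetilde{F}$ has a well-defined left polyhedron $\widetilde{Q} \subseteq \widetilde{X}$, which is the cubical neighborhood of a unique $0$-cube $v$ of $\widetilde{Y}$, and any element of $G$ fixing $\widetilde{F}$ pointwise preserves the two sides of $\widetilde{F}$ and hence stabilizes $\widetilde{Q}$ setwise. The next observation is that every left polyhedral pair $\mathbf{P} \in \overleftarrow{\mathbf{F}}$ restricts on the $X_1$ side to the unique polyhedron of $X_1$ lying on the left of $(F, \varphi_1)$, since in Section~\ref{sec:polyhedron} a face together with a side determines the polyhedron. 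Thus its lift $\widetilde{P}_1$ may be taken to be $\widetilde{Q}$, and by the lift-independence of $\omega$ established in Subsection~\ref{subsec:solvingEquations} we obtain $\omega(\mathbf{P}) = \mu\big(G_{(\widetilde{Q})}\big)$ for every $\mathbf{P} \in \overleftarrow{\mathbf{F}}$; in particular all left weights coincide.

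Next I would count $\overleftarrow{\mathbf{F}}$ by adapting the classification of admissible polyhedral pairs from Subsection~\ref{subsec:polyhedral pair}. A left polyhedral pair is recorded by an identification of the two forced left polyhedra, equivalently by a coset in $G_{v}/G_{(\widetilde{Q})}$ of an element of $G_v$ realizing it. The additional requirement that $\mathbf{F}$ be a face of $\mathbf{P}$ forces this identification to fix $\widetilde{F}$ pointwise, cutting the indexing set down to $\big(G_{v} \cap G_{(\widetilde{F})}\big)/G_{(\widetilde{Q})}$. The key point, which I expect to be the crux, is that $G_{(\widetilde{F})} \subseteq G_{v}$: since $v$ is the unique $0$-cube of $\widetilde{Y}$ contained in $\widetilde{Q}$ and every element fixing $\widetilde{F}$ pointwise stabilizes $\widetilde{Q}$ setwise, such an element must fix $v$. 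Consequently $\overleftarrow{\mathbf{F}}$ is in bijection with $G_{(\widetilde{F})}/G_{(\widetilde{Q})}$.

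Finally, since $G_{(\widetilde{Q})} \leq G_{(\widetilde{F})}$ are compact open subgroups of $G$ of finite index, unimodularity and left-invariance of $\mu$ give $\mu\big(G_{(\widetilde{F})}\big) = \big[G_{(\widetilde{F})} : G_{(\widetilde{Q})}\big] \cdot \mu\big(G_{(\widetilde{Q})}\big)$. Combining this with the two previous steps yields
\[
 \sum_{\mathbf{P} \in \overleftarrow{\mathbf{F}}} \omega(\mathbf{P}) = \big[G_{(\widetilde{F})} : G_{(\widetilde{Q})}\big] \cdot \mu\big(G_{(\widetilde{Q})}\big) = \mu\big(G_{(\widetilde{F})}\big).
\]
The identical argument applied to the right polyhedron of the same lift $\widetilde{F}$ shows $\sum_{\mathbf{P} \in \overrightarrow{\mathbf{F}}} \omega(\mathbf{P}) = \mu\big(G_{(\widetilde{F})}\big)$ as well, which establishes the gluing equation. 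The main obstacle is the counting step: one must verify carefully that imposing "$\mathbf{F}$ is a face of $\mathbf{P}$" precisely replaces the vertex stabilizer $G_{v}$ appearing in the general classification by the pointwise face stabilizer $G_{(\widetilde{F})}$, together with the containment $G_{(\widetilde{F})} \subseteq G_{v}$ that makes the two descriptions compatible.
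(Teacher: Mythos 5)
Your argument is correct and is essentially the paper's own proof: the paper likewise parametrizes the polyhedral pairs on the left of $\mathbf{F}$ by the cosets $G_{(\widetilde{F}_1)}/G_{(\widetilde{P}_1)}$ and sums the constant weight $\mu\big(G_{(\widetilde{P}_1)}\big)$ over them to obtain $\mu\big(G_{(\widetilde{F}_1)}\big)$, which is exactly your index computation. One small slip worth noting: admissibility only provides $g \in G$ with $g \circ \tilde{\varphi}_1 = \tilde{\varphi}_2$, not that the two lifts can be chosen literally equal; but since $\omega$ is computed from the $X_1$-side lift alone, this does not affect the argument.
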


\begin{proof}
Given an admissible face $\mathbf{F}$ we need to enumerate the polyhedral pairs on the left of $\mathbf{F}$ (and right).
Let $\tilde{\varphi}_i: F \rightarrow \widetilde{X}$ be a lift of $\varphi_i$.
Let $\widetilde{F}_i = \tilde{\varphi}_i(F)$.
Let $g \in G$ be such that $g \circ \tilde{\varphi}_1 = \tilde{\varphi}_2$.

Let $(P_i, \phi_i)$ be the polyhedron on the left of $(F, \varphi_i)$.
Let $\tilde{\phi}_i : P_i \rightarrow X$ be the lift of $\phi_i$ such that $\widetilde{P}_i := \tilde{\phi}_i(P_i)$ contains $\widetilde{F}_i$.
Then each polyhedral pair to the left of $\mathbf{P}$ is obtained by taking some $g' \in G_{(\widetilde{F}_i)}$ and letting $\tilde{\phi}_2^{-1} \circ gg' \circ \tilde{\phi}_1$ identify $P_1$ and $P_2$.
All $g'$ in the same $G_{(\widetilde{P}_1)}$ coset define the same polyhedral pair.
Therefore we can compute that
\[
 \sum_{\mathbf{P} \in \overleftarrow{\mathbf{F}}} \omega(\mathbf{P}) = \sum_{ G_{(\widetilde{F}_1)}/G_{(\widetilde{P}_1)}} \mu(G_{(\widetilde{P}_1)}) \; = \; \mu\Big(G_{(\widetilde{F}_1)}\Big).
\]
A similar equality holds on the right, so the Proposition holds.
\end{proof}

\subsection{Constructing a common cover} \label{subsec:constructingCover}
Assuming that our weight function gives positive integer values, take $\omega(\mathbf{P})$ copies of each  $\mathbf{P} \in \mathcal{P}$.
Let $\mathcal{P}_{\omega}$ denote this multiset.
Let $\overleftarrow{\mathbf{F}}_{\omega}$ and $\overrightarrow{\mathbf{F}}_{\omega}$ denote the polyhedron in $\mathcal{P}_\omega$ on the left and right of $\mathbf{F}$ respectively.
 Since $\omega$ satisfies the gluing equations, for each admissible face $\mathbf{F}$ we may fix a bijection $\overleftarrow{\mathbf{F}}_\omega \rightarrow \overrightarrow{\mathbf{F}}_\omega$.
By gluing the corresponding admissible faces together we obtain a graph with fins: 
$$\widehat{X} = \bigsqcup_{\mathbf{P} \in \mathcal{P}_\omega} P \Big/ \sim$$
Then there is a covering map $\Phi: \widehat{X} \rightarrow X_i$ where $\restr{\Phi_i}{P} = \phi_i$ for each $\mathbf{P} = (P, \phi_1, \phi_2) \in \mathcal{P}_{\omega}$.
The proof of Theorem~\ref{thm:LeightonFins} is complete.

\bibliographystyle{plain}
\bibliography{ref.bib}

\def\cprime{$'$} \def\polhk#1{\setbox0=\hbox{#1}{\ooalign{\hidewidth
  \lower1.5ex\hbox{`}\hidewidth\crcr\unhbox0}}} \def\cprime{$'$}
  \def\cprime{$'$} \def\polhk#1{\setbox0=\hbox{#1}{\ooalign{\hidewidth
  \lower1.5ex\hbox{`}\hidewidth\crcr\unhbox0}}}
\begin{thebibliography}{10}

\bibitem{AngluinGardiner81}
Dana Angluin and A.~Gardiner.
\newblock Finite common coverings of pairs of regular graphs.
\newblock {\em J. Combin. Theory Ser. B}, 30(2):184--187, 1981.

\bibitem{Bass93}
Hyman Bass.
\newblock Covering theory for graphs of groups.
\newblock {\em J. Pure Appl. Algebra}, 89(1-2):3--47, 1993.

\bibitem{BassKulkarni90}
Hyman Bass and Ravi Kulkarni.
\newblock Uniform tree lattices.
\newblock {\em J. Amer. Math. Soc.}, 3(4):843--902, 1990.

\bibitem{BehrstockWNeumann12}
Jason~A. Behrstock and Walter~D. Neumann.
\newblock Quasi-isometric classification of non-geometric 3-manifold groups.
\newblock {\em J. Reine Angew. Math.}, 669:101--120, 2012.

\bibitem{BiswasMj12}
Kingshook Biswas and Mahan Mj.
\newblock Pattern rigidity in hyperbolic spaces: duality and {PD} subgroups.
\newblock {\em Groups Geom. Dyn.}, 6(1):97--123, 2012.

\bibitem{CashenMacura11}
Christopher~H. Cashen and Nata\v{s}a Macura.
\newblock Line patterns in free groups.
\newblock {\em Geom. Topol.}, 15(3):1419--1475, 2011.

\bibitem{CashenMartin17}
Christopher~H. Cashen and Alexandre Martin.
\newblock Quasi-isometries between groups with two-ended splittings.
\newblock {\em Math. Proc. Cambridge Philos. Soc.}, 162(2):249--291, 2017.

\bibitem{HagenTouikan18}
Mark~F. Hagen and Nicholas~W.M. Touikan.
\newblock Panel collapse and its applications.
\newblock Preprint, \url{https://arxiv.org/abs/1712.06553}.

\bibitem{Leighton82}
Frank~Thomson Leighton.
\newblock Finite common coverings of graphs.
\newblock {\em J. Combin. Theory Ser. B}, 33(3):231--238, 1982.

\bibitem{Levitt15}
Gilbert Levitt.
\newblock Generalized {B}aumslag-{S}olitar groups: rank and finite index
  subgroups.
\newblock {\em Ann. Inst. Fourier (Grenoble)}, 65(2):725--762, 2015.

\bibitem{Mj12}
Mahan Mj.
\newblock Pattern rigidity and the {H}ilbert-{S}mith conjecture.
\newblock {\em Geom. Topol.}, 16(2):1205--1246, 2012.

\bibitem{MosherSageevWhyte2011}
Lee Mosher, Michah Sageev, and Kevin Whyte.
\newblock Quasi-actions on trees {II}: {F}inite depth {B}ass-{S}erre trees.
\newblock {\em Mem. Amer. Math. Soc.}, 214(1008):vi+105, 2011.

\bibitem{WNeumann10}
Walter~D. Neumann.
\newblock On {L}eighton's graph covering theorem.
\newblock {\em Groups Geom. Dyn.}, 4(4):863--872, 2010.

\bibitem{Papasoglu05}
Panos Papasoglu.
\newblock Quasi-isometry invariance of group splittings.
\newblock {\em Ann. of Math. (2)}, 161(2):759--830, 2005.

\bibitem{Schwartz97}
Richard~Evan Schwartz.
\newblock Symmetric patterns of geodesics and automorphisms of surface groups.
\newblock {\em Invent. Math.}, 128(1):177--199, 1997.

\bibitem{Whitehead36}
J.~H.~C. Whitehead.
\newblock On equivalent sets of elements in a free group.
\newblock {\em Ann. of Math. (2)}, 37(4):782--800, 1936.

\end{thebibliography}
\end{document}